\newcommand{\1}{\mathds{1}}
\newcommand{\R}{\mathbb{R}}
\newcommand{\N}{\mathbb{N}}
\newcommand{\8}{\infty}
\newcommand{\spa}{\mathrm{span}}
\newcommand{\Ker}{\mathrm{Ker~}}
\newcommand{\Co}{\mathcal{C}}
\newcommand{\Fo}{\mathcal{F}}
\newcommand{\Int}{\mathrm{int}}
\newcounter{dummy} \numberwithin{dummy}{section}
\newtheorem{theorem}[dummy]{Theorem}
\newtheorem{lemma}[dummy]{Lemma}
\newtheorem{proposition}[dummy]{Proposition}
\newtheorem{corollary}[dummy]{Corollary}
\theoremstyle{remark}
\newtheorem{remark}[dummy]{Remark}
\begin{document}

\title{Vector lattice approach to the Riesz and Stone Representation theorems}
\author{Eugene Bilokopytov\footnote{Email address bilokopy@ualberta.ca, erz888@gmail.com.}}
\maketitle

\begin{abstract}
We present vector-lattice-theoretic proofs of Riesz Representation Theorem and Stone Representation Theorem.\medskip

\emph{Keywords:} Vector lattices, Boolean Algebras, Riesz Representation theorem;

MSC2020 06E15, 06F20, 46A40.
\end{abstract}

\section{Introduction}

In this note we present vector-lattice-theoretic proofs of two classical theorems of abstract analysis -- Riesz Representation Theorem and Stone Representation Theorem. In a way one should expect a possibility of such proofs, since the former theorem is a description of a special class of Banach lattices, whereas the latter theorem deals with the class of Boolean algebras, which my be viewed as a sister class to the vector lattices. In both cases we base our approach on the Kakutani Representation Theorem, which says that an Archimedean vector lattice with a unit is isomorphic to a dense sublattice of the Banach lattice $\Co\left(K\right)$ that contains $\1$, where $K$ is compact Hausdorff. We will present a truncated version of Riesz theorem and an extended version of Stone theorem.\medskip

Riesz Representation Theorem contains several statements: a signed Borel measure determines a positive functional on $\Co\left(K\right)$, there is a correspondence between regular Borel and Baire measures, a signed regular Borel measure uniquely determines a functional on $\Co\left(K\right)$, and that every positive functional on $\Co\left(K\right)$ is determined by a Baire measure. The first statement is rather simple, the last one is the hardest to prove, whereas the intermediate ones are meaningful stand-alone facts from measure theory. In this article we focus on the ``hardest part'', and in our proof most of the heavy lifting is done by the vector lattice theory.\medskip

The link from vector lattices to Boolean algebras is exemplified by the notion of the components of positive elements, as well as by the Boolean algebra of projective bands in a vector lattice. Our proof of Stone Representation Theorem reveals the link in the opposite direction and makes precise the intuition that a Boolean algebra is a ``horizontal layer'' of a vector lattice.

\section{Preliminaries + Riesz}

Everywhere in this section $K$ is a compact Hausdorff space. The indicator function of $A\subset K$ is denoted as $\1_{A}$. In particular, $\1_{K}$ is the constant function $1$. If $K$ is clear from the context we will drop the subscript. A function $f:K\to \R$ is called \emph{simple} if it only attains a finite set of values. Such function necessarily has to be a linear combination of indicators. If $f$ is continuous, it is a linear combination of indicators of \emph{clopen} (simultaneously closed and open) sets. It is easy to see that simple continuous functions form a subalgebra of $\Co\left(K\right)$. This subalgebra separates points of $K$ (and so is dense) if and only if $K$ is \emph{totally disconnected} (i.e. contains no connected sets other than singletons).\medskip

Recall that a \emph{vector lattice} is a vector space $E$ endowed with the order $\ge$ such that $E_{+}=\left\{e\in E,~ e\ge 0_{E}\right\}$ is a convex cone, and every $e,f\in E$ have a maximum $e\vee f$ and a minimum $e\wedge f$. In fact, the last pair of requirements can be replaced by existence of $\left|e\right|=e\vee -e$, for every $e\in E$. The following three properties of a vector lattice will be important to us: $E$ is called \emph{Dedekind complete}, if every $A\subset E$ such that $A\le e$, for some $e\in E$, has a supremum; $E$ is said to have \emph{principal projection property} (PPP) if $\sup\limits_{n\in\N}e\wedge nf$ exists for every $e,f\in E_{+}$; finally $E$ is called \emph{Archimedean} if $\inf\limits_{n\in\N}\frac{1}{n}e=0_{E}$, for every $e\in E_{+}$. These properties are listed in the order of descending strength.

A linear map $T:E\to F$ between vector lattices is \emph{positive} if $TE_{+}\subset F_{+}$. In particular, if $F=\R$ the functionals of the form $\nu-\mu$, where $\nu,\mu\in F'_{+}$ form a complete vector lattice. $T:E\to F$ is called a \emph{homomorphism}, if it preserves the lattice operations (equivalently, $T\left|e\right|=\left|Te\right|$, for every $e\in E$). An \emph{isomorphism} is a bijective homomorphism. An element $e\in E_{+}$ is called a \emph{(strong) unit} of $E$, if for every $f\in E$ there is $\alpha\ge 0$ such that $\left|f\right|\le \alpha e$. It turns out that Archimedean vector lattices with units are of a specific form (see \cite[Theorem 2.1.3]{mn}).

\begin{theorem}[Kakutani Representation Theorem]
If $E$ is an Archimedean vector lattice with a unit $e$, then there is a compact Hausdorff space $K$, a dense sublattice $F$ of $\Co\left(K\right)$ and a lattice isomorphism $T:E\to F$ such that $Te=\1$.
\end{theorem}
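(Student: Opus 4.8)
The plan is to realize $E$ as functions on its space of normalized lattice homomorphisms via an evaluation (Gelfand-type) map. First I would turn the unit into a norm: define $\left\|f\right\|=\inf\left\{\alpha\ge 0:\left|f\right|\le\alpha e\right\}$, which is finite for every $f$ precisely because $e$ is a unit, and which is a genuine norm (not merely a seminorm) because the Archimedean property forces $f=0_{E}$ whenever $\left|f\right|\le\alpha e$ for all $\alpha>0$. This makes $E$ a normed vector lattice in which $\left|f\right|\le\left|g\right|$ implies $\left\|f\right\|\le\left\|g\right\|$ and $\left\|e\right\|=1$.

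Next I would introduce the state space $S=\left\{\phi\in E':\phi\ge 0,~\phi\left(e\right)=1\right\}$. Since $\phi\ge 0$ and $\phi\left(e\right)=1$ give $\left|\phi\left(f\right)\right|\le\left\|f\right\|$, the set $S$ lies in the closed unit ball of $E'$; being cut out by the weak-$*$-closed conditions $\phi\ge 0$ and $\phi\left(e\right)=1$, it is weak-$*$ compact by Banach--Alaoglu, and it is clearly convex and (by Hahn--Banach, extending $\alpha e\mapsto\alpha$ off $\spa\left\{e\right\}$) nonempty. I would then let $K$ be the set of nonzero lattice homomorphisms $\phi:E\to\R$ with $\phi\left(e\right)=1$, equipped with the weak-$*$ topology. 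Each defining condition $\phi\left(f\vee g\right)=\phi\left(f\right)\vee\phi\left(g\right)$ is weak-$*$ closed, so $K$ is a weak-$*$-closed subset of $S$, hence compact, and it is Hausdorff because the weak-$*$ topology is. Define $T:E\to\Co\left(K\right)$ by $\left(Tf\right)\left(\phi\right)=\phi\left(f\right)$. As each $\phi\in K$ is a lattice homomorphism, $T$ is itself a lattice homomorphism, and $Te=\1$ because $\phi\left(e\right)=1$ on $K$.

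The crux, and the step I expect to be the main obstacle, is to show that $K$ is rich enough, which I would do by proving that the extreme points of $S$ are exactly the elements of $K$. That homomorphisms are extreme is the easy direction. For the reverse, given an extreme $\phi$ I must show $\phi\left(f\right)\wedge\phi\left(g\right)=0$ whenever $f\wedge g=0_{E}$ in $E_{+}$; the idea is that for $0_{E}\le u\le e$ with $0<\phi\left(u\right)<1$ one can split $\phi$ as a proper convex combination $\phi\left(u\right)\phi_{1}+\left(1-\phi\left(u\right)\right)\phi_{2}$ of two distinct states built from $u$, contradicting extremality unless the relevant disjoint pieces are annihilated. Making $\phi_{1},\phi_{2}$ well defined without any completeness assumption on $E$ is the delicate point; it rests on passing to the real limits $\lim\limits_{n\in\N}\phi\left(f\wedge nu\right)$, which exist by monotonicity even when the corresponding suprema fail to exist in $E$.

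With the key lemma in hand the remaining properties follow from standard convexity and approximation tools. For injectivity I would use that $\left\|h\right\|=\sup\limits_{\phi\in S}\phi\left(h\right)$ for $h\in E_{+}$ (a Hahn--Banach separation in the order-unit norm); since $\phi\mapsto\phi\left(h\right)$ is weak-$*$ continuous and affine, Bauer's maximum principle places the supremum at an extreme point, i.e. at some $\phi\in K$. Applying this to $h=\left|f\right|$ yields $\phi\in K$ with $\left|\phi\left(f\right)\right|=\phi\left(\left|f\right|\right)=\left\|f\right\|$, so $Tf\ne 0$ whenever $f\ne 0_{E}$; in fact $T$ is isometric for the supremum norm. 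Finally $TE$ is a sublattice of $\Co\left(K\right)$ that contains $\1$ and separates the points of $K$ (distinct homomorphisms differ on some $f$, so $Tf$ separates them), whence the lattice form of the Stone--Weierstrass theorem shows $TE$ is dense in $\Co\left(K\right)$, completing the proof.
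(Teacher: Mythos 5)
The paper never proves this statement: Kakutani's theorem is imported verbatim from \cite[Theorem 2.1.3]{mn} and used as a black box in the proofs of the Riesz and Stone theorems, so there is no internal proof to compare against. What you propose is the classical Yosida--Kakutani state-space argument: the order-unit norm (a genuine norm exactly by Archimedeanness), the weak-$*$ compact convex state space $S$, the identification of the extreme points of $S$ with the normalized real-valued lattice homomorphisms, Bauer's maximum principle applied to $\phi\mapsto\phi\left(\left|f\right|\right)$ to make $T$ an isometry (hence injective), and the Kakutani--Krein lattice form of Stone--Weierstrass for density --- available because $TE$ is a linear sublattice containing $\1$ that separates points of $K$, hence interpolates arbitrary pairs of values at pairs of points. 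This route is sound and self-contained, and you correctly locate the heart of the matter in the extreme-point lemma and in the use of the real limits $\lim_{n}\phi\left(f\wedge nu\right)$, which is precisely how one avoids any completeness hypothesis on $E$.

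Two points need repair or precision. First, your convex weights in the extremality split are off: setting $\psi\left(f\right)=\lim_{n}\phi\left(f\wedge nu\right)$ for $f\in E_{+}$ (additive because $\left(f+g\right)\wedge nu\le f\wedge nu+g\wedge nu\le\left(f+g\right)\wedge 2nu$, hence extending linearly with $0\le\psi\le\phi$), the correct decomposition is $\phi=\psi\left(e\right)\frac{\psi}{\psi\left(e\right)}+\left(1-\psi\left(e\right)\right)\frac{\phi-\psi}{1-\psi\left(e\right)}$, where $\psi\left(e\right)=\lim_{n}\phi\left(e\wedge nu\right)$ can be strictly larger than the weight $\phi\left(u\right)$ you wrote; and the degenerate cases must be handled separately: if $\psi\left(e\right)=1$ then $\phi-\psi\ge 0$ kills $e$ and hence vanishes identically, while if $\psi\left(e\right)=0$ (taking $u=f\wedge e$ for disjoint $f,g\in E_{+}$) one concludes $\phi\left(f\right)\le M\phi\left(f\wedge e\right)=0$ with $M=\max\left(1,\left\|f\right\|\right)$, since $f\le M\left(f\wedge e\right)$. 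Second, ``extending $\alpha e\mapsto\alpha$ by Hahn--Banach'' does not by itself yield a \emph{positive} functional: you must dominate the extension by the sublinear functional $p\left(f\right)=\inf\left\{\alpha\in\R:\ f\le\alpha e\right\}$, and the same device (extending $\lambda h\mapsto\lambda p\left(h\right)$ from $\spa\left\{h\right\}$ under $p$) is what proves $\sup_{\phi\in S}\phi\left(h\right)=\left\|h\right\|$ for $h\in E_{+}$ before Bauer's principle moves the maximizer to an extreme point; this also secures $K\ne\varnothing$ via Krein--Milman. With these standard repairs your proposal is a complete and correct proof, and it is genuinely more self-contained than the paper, which simply cites \cite{mn}.
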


An \emph{AL space} is a vector lattice endowed with a norm $\|\cdot\|$ such that $\|e+f\|=\|e\|+\|f\|$, for every $e,f\in E_{+}$. It is easy to see that $L_{1}\left(\mu\right)$ is an AL space, for every measure $\mu$. We will use the fact that $\Co\left(K\right)^{*}$ is an AL space, and the dual of an AL space is of the form $\Co\left(K\right)$ (combine Kakutani representation theorem with \cite[Proposition 1.4.7]{mn}).\bigskip

Let $\mathcal{B}_{K}$ be the \emph{Baire} $\sigma$\emph{-algebra} of $K$, i.e. the minimal algebra of subsets of $K$ which makes every $f\in\Co\left(K\right)$ measurable. Clearly, the Baire $\sigma$-algebra is a subalgebra of the Borel $\sigma$-algebra. These subalgebras are in fact equal if $K$ is a compact metric space (see \cite[21.6 Theorem 20]{royden}). A Baire measure is a finite measure on $\mathcal{B}_{K}$. Every Baire measure on a compact space is regular (see \cite[21.6 Proposition 22]{royden}).\medskip

Recall that a subset $A$ of $K$ is called \emph{meager} (or first category) if $A=\bigcup\limits_{n\in\N}A_{n}$, where each $A_{n}$ is nowhere dense, i.e. $\Int \overline{A_n}=\varnothing$. Clearly, a subset of a meager set and a union of a countable collection of meager sets are meager, and so meager sets form a $\sigma$-ideal of subsets of $K$. Since $K$ is compact, it is a \emph{Baire space}, i.e. the only open meager subset of $K$ is $\varnothing$ (see \cite[IX.5, Theorem 1]{bourb}).

Also recall that $U\subset K$ is called \emph{regularly open} if $U=\Int\overline{U}$. We will call $V\subset K$ \emph{almost open} if $V=U\bigtriangleup A$, where $U$ is regularly open and $A$ is meager. Note that since $K$ is a Baire space, this representation is unique, and the collection $\mathcal{A}_{K}$ of almost open sets is a $\sigma$-algebra, which contains all open, and therefore all Borel sets (see \cite[theorems 4.3 and 4.6]{oxtoby}). Hence, the Boolean algebra of regularly open subsets of $K$ is the quotient of $\mathcal{A}_{K}$ with respect to the $\sigma$-ideal of meager sets. Almost open sets are often called sets with a Baire property, but we will not use this term to avoid confusion with the Baire $\sigma$-algebra.\medskip

We will use the following observation: if $\left\{V_{n}\right\}_{n\in\N}\subset \mathcal{A}_{K}$ is an increasing sequence with $V=\bigcup\limits_{n\in\N}V_{n}$, and $V_{n}=U_{n}\bigtriangleup A_{n}$, for every $n\in\N$, where $U_{n}$ is regularly open, and $A_{n}$ is meager, then $V=U\bigtriangleup A$, where $U=\Int\overline{\bigcup\limits_{n\in\N}U_{n}}$, and $A$ is meager. Indeed, $U\backslash V\subset \bigcup\limits_{n\in\N}A_{n}$ and $V\backslash U\subset \partial \left(\bigcup\limits_{n\in\N}U_{n}\right)\cup \bigcup\limits_{n\in\N}A_{n}$ are both meager. This observation essentially tells that the quotient map from $\mathcal{A}_{K}$ onto the algebra of regularly open sets is $\sigma$-order-continuous.\medskip

Finally, recall that $K$ is called \emph{extremely disconnected} if the closure of every open set in $K$ is open (and so clopen). Every regularly open subset of $K$ in this case is clopen, and so $\mathcal{A}_{K}$ consists of sets of the form $V=U\bigtriangleup A$, where $U$ is clopen and $A$ is meager. We are now ready to prove the Riesz Representation theorem.

\begin{theorem}
For every positive functional $\xi$ on $\Co\left(K\right)$ there is a Baire measure $\mu$ on $K$ such that $\xi\left(f\right)=\int f d \mu$, for every $f\in \Co\left(K\right)$.
\end{theorem}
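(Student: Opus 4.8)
The plan is to move $\xi$ into the second dual, where Dedekind completeness supplies enough idempotents to build the measure by hand, and then to read that measure back on $K$ through the $\sigma$-algebra $\mathcal{A}_{K}$ of almost open sets. First I would pass to the bidual. Since $\Co\left(K\right)^{*}$ is an AL space, its dual $\Co\left(K\right)^{**}$ is a Dedekind complete AM space with a unit; by the Kakutani theorem it is lattice isometric to $\Co\left(\hat K\right)$ for a compact Hausdorff $\hat K$, and Dedekind completeness forces $\hat K$ to be extremely disconnected. Under the canonical embeddings $\Co\left(K\right)\hookrightarrow\Co\left(\hat K\right)$ and $\Co\left(K\right)^{*}\hookrightarrow\Co\left(K\right)^{***}$, the functional $\xi$ becomes a positive functional $\hat\xi$ on $\Co\left(\hat K\right)$ with $\hat\xi|_{\Co\left(K\right)}=\xi$, and — this is the feature I will exploit — it is order continuous, because the canonical copy of $\Co\left(K\right)^{*}$ inside $\Co\left(K\right)^{***}$ consists of order continuous functionals on $\Co\left(K\right)^{**}$. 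Thus I have reduced matters to an order continuous positive functional on $\Co\left(L\right)$ with $L$ extremely disconnected.

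Next I would produce a finitely additive set function from the components of the unit. In the Dedekind complete space $\Co\left(\hat K\right)$ every regularly open $U\subset K$ yields a component $p_{U}=\sup\{f\in\Co\left(K\right):0\le f\le\1,\ \{f>0\}\subset U\}$ of $\1$, that is, the indicator of a clopen subset of $\hat K$; the assignment $U\mapsto p_{U}$ is an order continuous Boolean homomorphism out of the algebra of regularly open subsets of $K$. Setting $\mu\left(U\bigtriangleup A\right)=\hat\xi\left(p_{U}\right)$ for regularly open $U$ and meager $A$ then defines a function on $\mathcal{A}_{K}$; it is well defined because a nowhere dense set carries vanishing component and, by $\sigma$-order continuity, so does any meager set. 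Finite additivity is immediate from that of the $p_{U}$, and countable additivity is exactly where the two order continuity statements combine: given an increasing $\left\{V_{n}\right\}\subset\mathcal{A}_{K}$ with union $V$, the observation recorded above that the quotient map from $\mathcal{A}_{K}$ onto the regularly open algebra is $\sigma$-order continuous yields $p_{U_{n}}\uparrow p_{U}$, whence $\hat\xi\left(p_{U_{n}}\right)\to\hat\xi\left(p_{U}\right)$ by order continuity of $\hat\xi$. Since $\mathcal{B}_{K}\subset\mathcal{A}_{K}$, restricting $\mu$ to the Baire $\sigma$-algebra produces a Baire measure, which is automatically regular.

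Finally I would verify the representation. For $f\in\Co\left(K\right)_{+}$, approximating $f$ from below by simple functions built on the regularizations of its level sets expresses $f$, inside $\Co\left(\hat K\right)$, as an order limit of finite combinations $\sum c_{i}p_{U_{i}}$; applying the order continuous functional $\hat\xi$ and matching the same combinations against $\mu$ gives $\xi\left(f\right)=\hat\xi\left(f\right)=\int f\,d\mu$, and linearity extends this to all of $\Co\left(K\right)$. I expect the real obstacle to be the measure-theoretic core of the middle paragraph — verifying that meager sets are null and that $\mu$ is genuinely countably additive on $\mathcal{A}_{K}$ — since this is precisely the point at which order continuity in the bidual must be converted into $\sigma$-additivity on $K$, with the $\sigma$-order continuity of the quotient map doing the decisive work.
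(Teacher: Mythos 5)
Your opening paragraph coincides with the paper's proof (pass to $\Co\left(K\right)^{**}=\Co\left(L\right)$ with $L$ extremely disconnected, note that $\xi$ acts on the bidual as an order continuous functional because it lies in the predual), but the middle of your argument fails concretely. You define $\mu\left(U\bigtriangleup A\right)=\hat\xi\left(p_{U}\right)$ for $U$ regularly open \emph{in $K$}, where $p_{U}=\sup\left\{f\in\Co\left(K\right):0\le f\le\1,\ \left\{f>0\right\}\subset U\right\}$ computed in the bidual. This set function is not even finitely additive. Take $K=\left[0,1\right]$, $\xi=\delta_{1/2}$, $U_{1}=\left[0,\tfrac{1}{2}\right)$, $U_{2}=\left(\tfrac{1}{2},1\right]$: these are disjoint regularly open sets, and $U_{1}\cup U_{2}=K\bigtriangleup\left\{\tfrac{1}{2}\right\}$, so the regularly open part of the union is $\Int\overline{U_{1}\cup U_{2}}=K$. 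Every $f$ in the defining family of $p_{U_{i}}$ vanishes at $\tfrac{1}{2}$, and these families are upward directed, so order continuity of $\hat\xi$ gives $\hat\xi\left(p_{U_{i}}\right)=\sup f\left(\tfrac{1}{2}\right)=0$, while $\hat\xi\left(p_{K}\right)=\xi\left(\1\right)=1$. Hence $\mu\left(U_{1}\cup U_{2}\right)=1\ne 0=\mu\left(U_{1}\right)+\mu\left(U_{2}\right)$: your map $U\mapsto p_{U}$ is monotone and preserves disjointness, but it does \emph{not} preserve joins in the regularly open algebra of $K$, so it is not the Boolean homomorphism you claim. The deeper obstruction is that your construction forces every meager subset of $K$ to be null, whereas representing measures genuinely charge meager sets: the unique Baire ($=$ Borel here) measure representing $\delta_{1/2}$ is the point mass on the nowhere dense set $\left\{\tfrac{1}{2}\right\}$. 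So no definition that factors through ``the regularly open part in $K$'' can represent all positive functionals.

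The device you are missing is the paper's map between the two spaces. Since the canonical embedding $T:\Co\left(K\right)\to\Co\left(L\right)$ is a lattice homomorphism with $T\1_{K}=\1_{L}$, there is a continuous $\varphi:L\to K$ with $Tf=f\circ\varphi$, and the measure is built not on $\mathcal{A}_{K}$ but on $\mathcal{A}_{L}$ --- where, because $L$ is extremely disconnected, the regularly open sets are clopen, their indicators lie in $\Co\left(L\right)$, and the clopen-mod-meager calculus together with order continuity of $\xi$ does yield $\sigma$-additivity, with agreement on all of $\Co\left(L\right)$ following from norm density of simple continuous functions ($L$ is totally disconnected) rather than from your order-approximation of level sets, which inherits the same defect on $K$. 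One then sets $\mu\left(A\right)=\mu_{\xi}\left(\varphi^{-1}\left(A\right)\right)$ for Baire $A\subset K$. The point your proposal overlooks is that $\varphi^{-1}$ destroys meagerness: in the example above, $\varphi^{-1}\left(\left\{\tfrac{1}{2}\right\}\right)$ contains a clopen subset of $L$ carrying the full mass of $\delta_{1/2}$, which is exactly how the pulled-back measure manages to charge a nowhere dense subset of $K$. Transplanted from $K$ to $L$ in this way, your first and third paragraphs become essentially the paper's proof; as written on $K$, the middle paragraph is irreparable.
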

\begin{proof}
We know that $\Co\left(K\right)^{*}$ is an AL space, and so $\Co\left(K\right)^{**}=\Co\left(L\right)$, for some compact $L$. Since $\Co\left(L\right)$ is a dual Banach lattice, it is Dedekind complete (see \cite[Theorem 1.3.2]{mn}), and so $L$ is extremely disconnected (see \cite[Proposition 2.1.4]{mn}).\medskip

Let $T$ be the natural inclusion of $\Co\left(K\right)$ into $\Co\left(K\right)^{**}=\Co\left(L\right)$, which is a homomorphism (see \cite[Proposition 1.4.5]{mn}). For every  $\nu\in \Co\left(K\right)^{*}_{+}$ we have $$\left<\1_{K},\nu\right>=\max\limits_{f\in \left[-\1_{K},\1_{K}\right]}\left<f,\nu\right>=\|\nu\|=\max\limits_{h\in \left[-\1_{L},\1_{L}\right]}\left<\nu,h\right>=\left<\nu,\1_{L}\right>,$$
and so $T\1_{K}=\1_{L}$. Therefore, $T$ is a continuous isometric lattice homomorphism from $\Co\left(K\right)$ into $\Co\left(L\right)$ such that $T\1_{K}=\1_{L}$, from where there is a continuous map $\varphi:L\to K$ such that $Tf=f\circ\varphi$, for every $f\in \Co\left(K\right)$ (see \cite[Theorem 3.2.12]{mn}).\medskip

We will define a measure $\mu_{\xi}$ on $\mathcal{A}_{L}$ so that $\xi\left(f\right)=\int fd\mu_{\xi}$, for every $f\in \Co\left(L\right)$. For every clopen $U\subset L$ and meager $A\subset L$ define $\mu_{\xi}\left(U\bigtriangleup A\right)=\xi\left(\1_{U}\right)$, where the latter is defined since $\1_{U}\in \Co\left(L\right)$ and $U$ is uniquely determined by $U\bigtriangleup A$. It is clear that $\mu_{\xi}$ is finitely additive. To show $\sigma$-additivity, let $\left\{V_{n}\right\}_{n\in\N}\subset \mathcal{A}_{L}$ be an increasing sequence and let $V=\bigcup\limits_{n\in\N}V_{n}$; let $\left\{U_{n}\right\}_{n\in\N}$ and $U$ be the corresponding clopen sets. From the comments before the theorem, $\left\{U_{n}\right\}_{n\in\N}$ is increasing and $U=\Int\overline{\bigcup\limits_{n\in\N}U_{n}}$, from where  $\1_{U_{n}}\uparrow\1_{U}$. Since $\xi$ is an element of the pre-dual of $\Co\left(L\right)$, it is order continuous (see \cite[Theorem 1.4.14]{mn}), and so $\xi\left(\1_{U_{n}}\right)\uparrow\xi\left(\1_{U}\right)$. Hence, $\mu_{\xi}\left(V_{n}\right)=\xi\left(\1_{U_{n}}\right)\uparrow \xi\left(\1_{U}\right)=\mu_{\xi}\left(V\right)$.\medskip

We get that both $\xi$ and $f\to \int fd\mu_{\xi}$ are continuous linear functionals on $\Co\left(L\right)$, which agree on the subalgebra of simple continuous functions. Since $L$ is extremely disconnected, it is totally disconnected, and so simple continuous functions form a dense subalgebra of $\Co\left(L\right)$, and so $\xi\left(f\right)=\int fd\mu_{\xi}$, for every $f\in \Co\left(L\right)$.

Finally, define a Baire measure $\mu$ on $K$ by $\mu\left(A\right)=\mu_{\xi}\left(\varphi^{-1}\left(A\right)\right)$, for $A\in \mathcal{B}_{K}$. Note that $A$ is Baire, hence Borel, from where $\varphi^{-1}\left(A\right)$ is Borel, hence almost open. Then $\xi\left(f\right)=\left<\xi,Tf\right>=\left<\xi,f\circ\varphi\right>=\int f\circ\varphi d \mu_{\xi}=\int f d \mu$, for every $f\in \Co\left(K\right)$.
\end{proof}

Let us recap the proof of $\sigma$-additivity in a conceptual way. First $\mathcal{B}_{K}$ are mapped into $\mathcal{A}_{L}$ via $A\to\varphi^{-1}\left(A\right)$. This correspondence is order-continuous, in the sense that it respects arbitrary unions and intersections. Then $\mathcal{A}_{L}$, is factorized with respect to the $\sigma$-ideal of the meager sets. The quotient map is $\sigma$-order-continuous. The corresponding quotient is the Boolean algebra of the clopen subsets of $L$. This algebra embeds into $\Co\left(L\right)$ via $U\to \1_{U}$ (again order-continuously). Finally, $\xi$ acts on the indicators in order continuous fashion. Thus, $\mu$ is $\sigma$-order continuous on $\mathcal{B}_{K}$ as a composition of several $\sigma$-order continuous maps.

\begin{remark}
In a similar way, for any finite measure $\mu$, one can show that there is a compact space $K$, such that $\mu$ is isomorphic to a measure on $K$, and moreover $L_{1}\left(\mu\right)^{*}=L_{\8}\left(\mu\right)=\Co\left(K\right)$.
\qed\end{remark}

We conclude this section with some additional facts about $\Co\left(K\right)$, or more precisely its dense sublattices, whose importance is justified by Kakutani representation theorem.

\begin{proposition}[Sublattice Urysohn lemma]
Let $E$ be a dense sublattice of $\Co\left(K\right)$ that contains $\1$, let $U\subset K$ be open and let $L\subset U$ be closed. Then $E_{+}$ contains a function whose value is $1$ on $L$ and $0$ outside $U$. In particular, $E$ contains all simple continuous functions.
\end{proposition}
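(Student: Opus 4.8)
The plan is to combine the classical Urysohn lemma with the density of $E$ in $\Co\left(K\right)$ and a truncation performed entirely inside the sublattice. Since $K$ is compact Hausdorff it is normal, and $L$ and $K\setminus U$ are disjoint closed sets, so the ordinary Urysohn lemma supplies a function $g\in\Co\left(K\right)$ with $0\le g\le 1$, with $g=1$ on $L$ and $g=0$ on $K\setminus U$. This $g$ need not lie in $E$, and the whole task is to manufacture an element of $E$ realizing the same two boundary conditions exactly, not merely approximately.

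First I would use density to pick $h\in E$ with $\|h-g\|_{\8}<\tfrac14$. Then $h>\tfrac34$ on $L$ and $h<\tfrac14$ on $K\setminus U$, so the values of $h$ on the two target regions are separated by a definite gap. The idea is to collapse that gap by an order-theoretic truncation that fixes the high region at $1$ and the low region at $0$. Concretely, set
\[
f=\left(2\left(h-\tfrac14\1\right)\vee 0_{E}\right)\wedge\1.
\]
Because $E$ is a linear sublattice of $\Co\left(K\right)$ containing $\1$ (hence also $0_{E}$ and $\tfrac14\1$), each operation used here keeps us inside $E$, so $f\in E$; and $0\le f\le 1$ forces $f\in E_{+}$.

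It then remains to verify the two conditions. The pointwise recipe $t\mapsto\left(2(t-\tfrac14)\vee 0\right)\wedge 1$ is the piecewise-linear map equal to $0$ on $(-\8,\tfrac14]$ and to $1$ on $[\tfrac34,\8)$; applied to $h$ it gives $f=1$ wherever $h\ge\tfrac34$, in particular on $L$, and $f=0$ wherever $h\le\tfrac14$, in particular on $K\setminus U$, which is exactly what is claimed. For the final assertion, if $C\subset K$ is clopen then taking $U=L=C$ the construction produces an element of $E_{+}$ equal to $1$ on $C$ and $0$ off $C$, that is $\1_{C}\in E$; since every simple continuous function is a finite linear combination of such indicators and $E$ is a linear subspace, all simple continuous functions lie in $E$.

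I do not expect a genuine obstacle here; the one point that must be handled with care is the quantitative matching between the approximation error and the truncation thresholds. The bound $\tfrac14$ is chosen precisely so that $\tfrac14<\tfrac34$ and the ``near $1$'' and ``near $0$'' regions of $h$ fall on opposite sides of the plateau of the truncating map; any error strictly below $\tfrac12$ would work after rescaling the thresholds accordingly.
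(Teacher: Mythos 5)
Your proof is correct and follows essentially the same route as the paper: classical Urysohn to get $g$, density of $E$ to get a nearby $h$, and a piecewise-linear lattice truncation built from $h$, $\1$, and $0_{E}$ that pins the values to $1$ on $L$ and $0$ off $U$; the paper merely uses the error bound $\tfrac13$ and the truncation $\left(3h-\1\right)^{+}\wedge\1$ in place of your $\tfrac14$ and $\left(2\left(h-\tfrac14\1\right)\vee 0_{E}\right)\wedge\1$. The derivation of the simple-function claim via $U=L=C$ for clopen $C$ is also the paper's argument.
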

\begin{proof}
From Tietze-Urysohn theorem there is $g\in \Co\left(K\right)$ such that $\1_{L}\le g \le \1_{U}$. Since $E$ is dense, there is $h\in E$ such that $\|h-g\|\le\frac{1}{3}$. The latter is equivalent to $-\frac{1}{3}\1\le h-g\le \frac{1}{3}\1$, or $3g-2 \1\le 3 h-\1 \le 3g$. Let $f= \left(3 h-\1\right)^{+}\wedge \1$. Then $f\le 3 g^{+}\wedge \1\le 3 \1_{U}\wedge \1=\1_{U}$, and simultaneously $f\ge \left(3g-2 \1\right)^{+}\wedge \1\ge \left(3\1_{L}-2 \1\right)^{+}\wedge \1=\1_{L}\wedge \1=\1_{L}$.

If $L$ is a clopen set, then applying the first claim to $U=L$ shows that $\1_{L}\in E$. Since any simple continuous function is a linear combination of indicators of clopen sets, it follows that $E$ contains all simple continuous functions.
\end{proof}

As was mentioned above, $K$ is extremely disconnected if and only if $\Co\left(K\right)$ is Dedekind complete. Let us characterize (compact) totally disconnected spaces.

\begin{proposition}\label{tot}
$K$ is totally disconnected if and only if $\Co\left(K\right)$ has a dense PPP sublattice that contains $\1$.
\end{proposition}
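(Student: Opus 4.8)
The plan is to prove both directions of the equivalence, exploiting the interplay between the topology of $K$ and the lattice structure of $\Co\left(K\right)$. Recall that $K$ is totally disconnected precisely when the simple continuous functions (linear combinations of indicators of clopen sets) form a dense subalgebra of $\Co\left(K\right)$, as noted at the start of the section.

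For the forward direction, assume $K$ is totally disconnected. I would take $E$ to be the sublattice generated by the simple continuous functions together with $\1$; in fact the simple continuous functions already form a subalgebra containing $\1$, and by the remark at the beginning of the section this subalgebra is dense when $K$ is totally disconnected. So the task reduces to verifying that this sublattice has PPP, i.e. that $\sup_{n\in\N} e\wedge nf$ exists in $E$ for every $e,f\in E_{+}$. The key observation is that for simple continuous $e,f$, the suprema $e\wedge nf$ stabilize: if $f$ is a nonnegative simple continuous function, then for $n$ large enough $nf\ge e$ on the clopen set $\left\{f>0\right\}$, so that $e\wedge nf$ eventually equals $e\cdot\1_{\left\{f>0\right\}}$, which is again simple continuous and lies in $E$. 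Thus the supremum is attained within $E$ and PPP holds.

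For the converse, suppose $E$ is a dense PPP sublattice of $\Co\left(K\right)$ containing $\1$. I want to produce enough clopen sets to separate points, i.e. to show $K$ is totally disconnected. The idea is that the PPP furnishes components (band projections) of the unit $\1$, and these components are idempotents of $\Co\left(K\right)$, hence indicators of clopen sets. Concretely, for $f\in E_{+}$ the element $u=\sup_{n\in\N}\1\wedge nf$ exists in $E$ by PPP; since $u$ satisfies $u\wedge\left(\1-u\right)=0$ and $0\le u\le\1$, it is an idempotent in $\Co\left(K\right)$, so $u=\1_{U}$ for a clopen set $U$, namely $U=\overline{\left\{f>0\right\}}$. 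Using density of $E$, I can separate any two distinct points $x,y\in K$ by a function $f\in E_{+}$ vanishing near $y$ but not at $x$, and the associated clopen set $U$ then contains $x$ but not $y$, which gives total disconnectedness.

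The main obstacle I anticipate is the converse direction, specifically the claim that $u=\sup_{n\in\N}\1\wedge nf$, computed as a supremum in the abstract lattice $E$, actually coincides with the pointwise supremum $\1_{\overline{\left\{f>0\right\}}}$ in $\Co\left(K\right)$. The subtlety is that suprema in a sublattice need not agree with pointwise suprema, and that the pointwise supremum $\1_{\left\{f>0\right\}}$ is generally not continuous, so some care is needed to identify which clopen set arises. I expect to resolve this by using that $E$ is dense in $\Co\left(K\right)$ and that the band projection interpretation forces $u$ to be the indicator of the regularly open (hence, by the idempotent relation, clopen) set attached to $f$; the Sublattice Urysohn lemma proved above will help confirm that the relevant clopen indicators genuinely lie in $E$ and that there are enough functions in $E$ to separate points.
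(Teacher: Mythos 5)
Your proposal is correct and takes essentially the same route as the paper: the forward direction via the stabilizing sequence $e\wedge nf=e\cdot\1_{\left\{f>0\right\}}$ for simple continuous functions, and the converse by applying PPP to $\1\wedge nf$ for a Sublattice-Urysohn function $f\in E_{+}$ and showing the supremum is the indicator of a clopen set separating the two points. The only cosmetic difference is that you first argue $u=\sup\limits_{n\in\N}\1\wedge nf$ is a component of $\1$ (an idempotent), whereas the paper directly identifies it with $\1_{K\backslash \Int f^{-1}\left(0\right)}$; both identifications ultimately rest on the same minimality argument, using the Sublattice Urysohn lemma to cut the supremum down on $\Int f^{-1}\left(0\right)$, which you correctly flag as the key subtlety.
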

\begin{proof}
If $K$ is totally disconnected, then the set of simple continuous functions is dense in $\Co\left(K\right)$, and is also a PPP lattice (this can be easily derived from definition, or from Theorem \ref{main} below).

Necessity: Let $x,y\in K$ be distinct. Let $U$ be a neighborhood of $x$ such that $y\not\in\overline{U}$. From Sublattice Urysohn lemma there is $f\in E_{+}$ that vanishes outside $U$, and such that $f\left(x\right)\ne 0$. It is easy to see that if $\sup\limits_{n\in\N}nf\wedge \1$ exists, it has to be equal $\1_{K\backslash \Int f^{-1}\left(0\right)}$, from where $\Int f^{-1}\left(0\right)$ is clopen. Since $f\left(x\right)=1$, we have $x\not\in f^{-1}\left(0\right)\supset \Int f^{-1}\left(0\right)$. On the other hand, since $f$ vanishes outside of $U$, we have $y\in X\backslash\overline{U}\subset \Int f^{-1}\left(0\right)$. Hence, we found a clopen set that contains exactly one of an arbitrary pair of points in $X$. Thus, $X$ is totally disconnected.
\end{proof}

\section{Vector lattice over a Boolean algebra + Stone}

In this section $A$ is a Boolean algebra with the least element $\mathds{O}$ and the greatest element $\mathds{E}$.

Let us start with an observation that many of the identities that hold in vector lattices (such as \cite[Theorem 1.1.1]{mn}) in fact ``conditionally'' hold in ordered vector spaces. For example, if $E$ is an ordered vector space and $e,f\in E$ are such that $e\wedge f$ exists, then $e\vee f$ also exists and $e+f=e\wedge f + e\vee f$.

\begin{lemma}\label{homo}
Let $E$ be an ordered vector space and let $\varphi:A\to E_{+}$. Then $\varphi$ is a lattice homomorphism (preserves $\vee$ and $\wedge$) with $\varphi\left(\mathds{O}\right)=0_{E}$ if and only if $\varphi\left(a\right)\bot\varphi\left(b\right)$ and $\varphi\left(a\vee b\right)=\varphi\left(a\right)+\varphi\left(b\right)$, for any disjoint $a,b\in A$. Moreover, in this case  $\varphi\left(A\right)$ is a Boolean algebra.
\end{lemma}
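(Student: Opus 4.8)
The plan is to prove the equivalence by establishing the easy forward implication directly from the preliminary identity, then reducing the substantial reverse implication to a single lattice computation about pairwise disjoint positive elements.

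For the forward direction, assume $\varphi$ preserves $\vee$ and $\wedge$ and $\varphi(\0)=0_E$. Given disjoint $a,b$ (so $a\wedge b=\0$), I immediately get $\varphi(a)\wedge\varphi(b)=\varphi(a\wedge b)=\varphi(\0)=0_E$, i.e. $\varphi(a)\bot\varphi(b)$. Since this infimum exists, the preliminary observation applied to $e=\varphi(a)$, $f=\varphi(b)$ gives $\varphi(a)+\varphi(b)=\varphi(a)\wedge\varphi(b)+\varphi(a)\vee\varphi(b)=0_E+\varphi(a\vee b)$, which is the required additivity. This direction is routine.

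For the reverse direction, assume that disjoint $a,b$ always satisfy $\varphi(a)\bot\varphi(b)$ and $\varphi(a\vee b)=\varphi(a)+\varphi(b)$. First, applying additivity to $\0=\0\vee\0$ yields $\varphi(\0)=2\varphi(\0)$, so $\varphi(\0)=0_E$. Next, for arbitrary $a,b\in A$ I introduce the standard decomposition $p=a\wedge b'$, $q=a\wedge b$, $r=a'\wedge b$, which are pairwise disjoint and satisfy $a=p\vee q$, $b=q\vee r$, $a\vee b=p\vee q\vee r$, $a\wedge b=q$, all joins being disjoint. Repeated use of the additivity hypothesis then gives $\varphi(a)=\varphi(p)+\varphi(q)$, $\varphi(b)=\varphi(q)+\varphi(r)$, and $\varphi(a\vee b)=\varphi(p)+\varphi(q)+\varphi(r)$. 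Writing $u=\varphi(p)$, $v=\varphi(q)$, $w=\varphi(r)$ (positive and pairwise disjoint), everything reduces to showing $(u+v)\wedge(v+w)=v$ and $(u+v)\vee(v+w)=u+v+w$ in $E$.

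I expect this last computation to be the main obstacle, because $E$ is merely an ordered vector space and so the existence of these joins and meets is itself part of what must be proved; I cannot simply invoke vector-lattice identities. The key step is that $v$ is evidently a lower bound of $\{u+v,\,v+w\}$, and for any lower bound $z$ the element $t=z-v$ is a common lower bound of $u$ and $w$; since $u\wedge w=0_E$ exists by the disjointness $\varphi(p)\bot\varphi(r)$, I conclude $t\le u\wedge w=0_E$, hence $z\le v$. This proves $(u+v)\wedge(v+w)=v$, and the preliminary observation then supplies existence of the join together with $(u+v)\vee(v+w)=u+v+w$. Translating back gives $\varphi(a)\wedge\varphi(b)=\varphi(a\wedge b)$ and $\varphi(a)\vee\varphi(b)=\varphi(a\vee b)$, so $\varphi$ is a lattice homomorphism.

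For the final claim, $\varphi$ preserves $\vee$ and $\wedge$, hence is order preserving and $\varphi(A)$ is a sublattice of $E$ with least element $\varphi(\0)=0_E$ and greatest element $\varphi(\mathds{E})$. Distributivity of $A$ transfers verbatim to $\varphi(A)$ through the homomorphism identities, and for each $a\in A$ with complement $a'$ I obtain $\varphi(a)\wedge\varphi(a')=\varphi(\0)=0_E$ and $\varphi(a)\vee\varphi(a')=\varphi(\mathds{E})$, so $\varphi(a')$ is a complement of $\varphi(a)$ relative to the bounds $0_E$ and $\varphi(\mathds{E})$. A bounded distributive complemented lattice is a Boolean algebra, which completes the proof.
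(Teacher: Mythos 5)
Your proof is correct and takes essentially the same route as the paper: both rest on the disjoint decomposition $a\wedge b$, $a\wedge\overline{b}$, $\overline{a}\wedge b$, reducing everything to the identity $(u+v)\wedge(v+w)=v$ for disjoint positive $u,w$, which the paper gets by translation invariance of existing meets and which you verify directly via the lower-bound argument, then recovering the join from the preliminary identity $e+f=e\wedge f+e\vee f$. You merely make explicit the steps the paper treats as routine (the necessity direction, deriving $\varphi\left(\mathds{O}\right)=0_{E}$ from $\mathds{O}=\mathds{O}\vee\mathds{O}$, and the Boolean-algebra claim for $\varphi\left(A\right)$), all correctly.
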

\begin{proof}
We only need to prove sufficiency. For any $a,b\in A$ the elements $a\wedge b$, $a\wedge \overline{b}$ and $b\wedge \overline{a}$ disjoint, and so $\varphi\left(a\wedge \overline{b}\right)\wedge \varphi\left(b\wedge \overline{a}\right)=0_{E}$, $\varphi\left(a\wedge \overline{b}\right)+\varphi\left(a\wedge b\right)=\varphi\left(a\right)$ and $\varphi\left(\overline{a}\wedge b\right)+\varphi\left(a\wedge b\right)=\varphi\left(b\right)$. Hence,
\begin{align*}
\varphi\left(a\wedge b\right)&=\varphi\left(a\wedge b\right)+\varphi\left(a\wedge \overline{b}\right)\wedge \varphi\left(b\wedge \overline{a}\right)\\
&=\left(\varphi\left(a\wedge \overline{b}\right)+\varphi\left(a\wedge b\right)\right)\wedge \left(\varphi\left(b\wedge \overline{a}\right)+\varphi\left(a\wedge b\right)\right)=\varphi\left(a\right)\wedge\varphi\left(b\right).
\end{align*}
The equality $\varphi\left(a\vee b\right)=\varphi\left(a\right)\vee\varphi\left(b\right)$ is proven similarly, but starting from $\varphi\left(a\vee b\right)=\varphi\left(a\wedge b\right)+\varphi\left(a\wedge \overline{b}\right)\vee\varphi\left(b\wedge \overline{a}\right)$.
\end{proof}

A useful property of Boolean algebras is that if $a_{1},...,a_{n}\in A$, then there is a disjoint finite collection $B\subset A$ such that $a_{k}=\bigvee\left\{b\in B,~b\le a_{k}\right\}$, for every $k\in\overline{1,n}$. Indeed, one can take $B=\left\{\bigwedge\limits_{i=1}^{n}a_{i}^{\varepsilon_{i}},~\varepsilon_{i}=\pm 1\right\}$, where $a^{1}=a$ and $a^{-1}=\overline{a}$.

\begin{theorem}\label{main}There is a PPP vector lattice $\Fo\left(A\right)$ with the following properties:
\item[(i)] There is an injective lattice homomorphism $\varphi$ from $A$ into $\Fo\left(A\right)_{+}$ such that $\varphi\left(\mathds{O}\right)=0_{\Fo\left(A\right)}$, $\varphi\left(\mathds{E}\right)$ is a strong unit and $\spa~ \varphi\left(A\right)= \Fo\left(A\right)$.
\item[(ii)] If $F$ is an ordered vector space and $\psi:A\to F_{+}$ is a lattice homomorphism with $\psi\left(\mathds{O}\right)=0_{E}$, then $\spa~ \psi\left(A\right)$ is a lattice in the induced order, and there is a vector lattice homomorphism $J_{\psi}$ from $\Fo\left(A\right)$ onto $\spa~ \psi\left(A\right)$ such that $\psi=J_{\psi}\circ \varphi$. It is an injection if and only if $\psi$ is.
\item[(iii)] $\Fo\left(A\right)$ is isomorphic to the lattice of simple continuous functions on a compact totally disconnected space $K$ such that $A$ is isomorphic to the Boolean algebra of clopen subsets of $K$.
\end{theorem}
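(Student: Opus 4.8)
The plan is to build $\Fo\left(A\right)$ as a direct limit of finite-dimensional simple-function lattices, to verify (i) and (ii) at this combinatorial level, and then to read off (iii) by feeding $\Fo\left(A\right)$ into the Kakutani Representation Theorem and Proposition \ref{tot}. For each finite Boolean subalgebra $A_{0}\le A$ with atoms $b_{1},\dots,b_{m}$, I set $\Fo\left(A_{0}\right)=\R^{m}$ with the coordinatewise order (a vector lattice, isomorphic to the $A_{0}$-measurable simple functions) and let $\varphi_{A_{0}}\left(a\right)$ be the $0$--$1$ vector supported on the atoms below $a$. Whenever $A_{0}\le A_{1}$, each atom of $A_{0}$ splits into atoms of $A_{1}$, and sending an atom-indicator to the indicator of the atoms of $A_{1}$ below it gives an injective lattice homomorphism $\Fo\left(A_{0}\right)\to\Fo\left(A_{1}\right)$. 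As the finite subalgebras of $A$ form a directed set, I define $\Fo\left(A\right)=\varinjlim\Fo\left(A_{0}\right)$ and $\varphi\left(a\right)=\lim\varphi_{A_{0}}\left(a\right)$. Since the connecting maps are lattice embeddings, the rule ``$f\ge 0$ iff some representative is coordinatewise nonnegative'' is well defined and makes $\Fo\left(A\right)$ a vector lattice in which every finite lattice operation, and in particular $\varphi\left(a\right)\wedge\varphi\left(b\right)=\varphi\left(a\wedge b\right)$, can be checked inside a single stage.

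For (i), Archimedean-ness and PPP are local: given $e,f\in\Fo\left(A\right)_{+}$ I pass to a stage $\R^{m}$ containing both, compute $\sup_{n}e\wedge nf$ there (it is $e$ restricted to the support of $f$), and check that it remains the supremum in $\Fo\left(A\right)$ because any upper bound lives in some larger stage whose connecting embedding preserves the relevant infima and suprema; the same argument yields $\inf_{n}\tfrac1n e=0$. That $\varphi$ is an injective lattice homomorphism with $\varphi\left(\mathds{O}\right)=0$ follows from Lemma \ref{homo} together with the stagewise description, $\varphi\left(\mathds{E}\right)$ is the all-ones vector at every stage and hence a strong unit, and $\spa~\varphi\left(A\right)=\Fo\left(A\right)$ since each stage is spanned by atom-indicators. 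For (ii), given a lattice homomorphism $\psi:A\to F_{+}$ with $\psi\left(\mathds{O}\right)=0_{E}$, Lemma \ref{homo} makes $\psi$ disjoint-additive, so $J_{\psi}\left(\sum_{j}c_{j}\varphi\left(b_{j}\right)\right)=\sum_{j}c_{j}\psi\left(b_{j}\right)$ (sum over a disjoint family) is a well-defined linear map; on each stage it carries an orthogonal basis to the pairwise disjoint positive elements $\psi\left(b_{j}\right)$, which forces it to be a lattice homomorphism onto $\spa~\psi\left(A\right)$ and shows the latter is a lattice. Injectivity of $J_{\psi}$ amounts to $\psi\left(b\right)\ne 0$ for $b\ne\mathds{O}$, i.e. to injectivity of $\psi$.

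For (iii), since $\Fo\left(A\right)$ is an Archimedean vector lattice with unit $\varphi\left(\mathds{E}\right)$, the Kakutani Representation Theorem supplies a compact Hausdorff $K$ and a lattice isomorphism $T$ onto a dense sublattice of $\Co\left(K\right)$ with $T\varphi\left(\mathds{E}\right)=\1$; as $\Fo\left(A\right)$ is PPP, Proposition \ref{tot} makes $K$ totally disconnected. Writing $\1-T\varphi\left(a\right)=T\varphi\left(\overline{a}\right)$, we get $T\varphi\left(a\right)\wedge\left(\1-T\varphi\left(a\right)\right)=T\varphi\left(a\wedge\overline{a}\right)=0$ with $0\le T\varphi\left(a\right)\le\1$, so $T\varphi\left(a\right)$ is $\{0,1\}$-valued, i.e. $T\varphi\left(a\right)=\1_{U_{a}}$ for a clopen $U_{a}$; hence $a\mapsto U_{a}$ is an injective Boolean homomorphism and $T\Fo\left(A\right)=\spa\left\{\1_{U_{a}}\right\}$ sits inside the simple continuous functions.

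The crux is the reverse inclusion: every clopen $V\subset K$ must be some $U_{a}$. I approximate $\1_{V}$ within $\tfrac13$ by $Tg$ for some $g\in\Fo\left(A\right)$; at the finite stage $\R^{m}$ where $g$ lives, let $a\in A$ be the join of those atoms on which the corresponding coordinate of $g$ exceeds $\tfrac12$. Then $U_{a}=\{Tg>\tfrac12\}$, and the choice of approximation forces $Tg>\tfrac12$ exactly on $V$, so $U_{a}=V$. This simultaneously identifies $A$ with the clopen algebra of $K$ and shows $T\Fo\left(A\right)$ is precisely the simple continuous functions, giving (iii) and with it the Stone representation. I expect this surjectivity-onto-clopens step — matching the components of $\varphi\left(\mathds{E}\right)$ with the clopen sets produced by Kakutani — to be the main obstacle, the direct-limit bookkeeping behind (i) and (ii) being essentially routine.
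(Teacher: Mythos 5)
Your proposal is correct in substance but reaches the construction by a genuinely different route. The paper builds $\Fo\left(A\right)$ in one shot as the quotient of the free vector space $\bigoplus_{a\in A}\R$ by the cone $C$ of Lemma \ref{cone}; the delicate points there are the explicit description of $C$, the proof that it is a cone, and its minimality, after which the lattice operations and PPP are computed via common disjoint representations (Lemma \ref{lattice}). Your direct limit over finite subalgebras replaces that cone analysis by the observation that any two finite stages embed into a common refinement, so every verification happens inside some $\R^{m}$ with the coordinatewise order. This is essentially the same underlying mechanism --- the paper's ``common disjoint representation'' is exactly your common finite stage --- but it trades Lemma \ref{cone} for routine direct-limit bookkeeping, which is arguably a cleaner division of labour, at the small cost of first checking that finitely generated Boolean subalgebras are finite and atomic and that the connecting maps are injective order-embeddings (both true, and your replication-of-coordinates description gives them). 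For (iii) both arguments run through Kakutani plus Proposition \ref{tot} and identify $T\varphi\left(a\right)$ as a component of $\1$; where the paper closes by citing the Sublattice Urysohn lemma to see that the image contains all simple continuous functions, your $\frac{1}{3}$-approximation of $\1_{V}$ followed by cutting at level $\frac{1}{2}$ at a finite stage is precisely the computation inside the paper's proof of that lemma, specialized to indicators --- correct, self-contained, and it makes the isomorphism of $A$ with the clopen algebra of $K$ more explicit than the paper's rather terse ending. The one place where you are thinner than the paper is (ii): the target $F$ is only an \emph{ordered} vector space, so ``carries an orthogonal basis to pairwise disjoint positive elements, which forces it to be a lattice homomorphism'' is not automatic. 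You must actually verify that for pairwise disjoint $\psi\left(b_{j}\right)\ge 0$ the infimum $\sum_{j} c_{j}\psi\left(b_{j}\right)\wedge\sum_{j} d_{j}\psi\left(b_{j}\right)$ \emph{exists} in $F$ and equals $\sum_{j}\left(c_{j}\wedge d_{j}\right)\psi\left(b_{j}\right)$, including for negative coefficients (e.g.\ translate by a suitable $\sum_{j} m_{j}\psi\left(b_{j}\right)$ to reduce to nonnegative ones, then use the conditional identities the paper flags before Lemma \ref{homo}, such as: if $x\wedge y=0_{F}$ exists then $x\vee y$ exists and equals $x+y$); this is exactly the displayed computation in the paper's proof of (ii), and without it your claim that $\spa\,\psi\left(A\right)$ is a lattice in the induced order is unsupported.
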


Of course part (iii) implies Stone theorem. We will prove the theorem in several steps. Let  $E=\bigoplus\limits_{a\in A}\R$, which is a vector space and let
$$C=\left\{\sum \alpha_{k}a_{k},~\forall a>\mathds{O}~\exists b\in\left(\mathds{O},a\right]~\forall c\in\left(\mathds{O},b\right]:~ \sum\limits_{c\le a_{k}}\alpha_{k}\ge 0\right\}.$$

\begin{lemma}\label{cone}
$C$ is the minimal convex cone in $E$ that contains $1\cdot a$, for every $a\in A$, as well as $1\cdot b +1\cdot d- 1\cdot b\vee d$ and $1\cdot b\vee d-1\cdot b-1\cdot d$, for any disjoint $b,d$.
\end{lemma}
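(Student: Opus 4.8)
The plan is to prove the two inclusions separately. Write $C_{0}$ for the convex cone generated by the listed elements (the $1\cdot a$ and the two families of additivity differences). I will first show $C_{0}\subseteq C$, which amounts to checking that $C$ is a convex cone containing every generator, and then the reverse inclusion $C\subseteq C_{0}$, which says $C$ is no larger than the generated cone and therefore is the minimal one.

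For $C_{0}\subseteq C$ I begin by verifying that $C$ is a convex cone. Stability under multiplication by $\lambda\ge 0$ is immediate from the defining inequalities, using the same witness. Stability under addition is the only point that uses the quantifier structure: given $x,y\in C$ and $a>\mathds{O}$, I pick $p_{1}\in(\mathds{O},a]$ witnessing membership of $x$, then apply the condition for $y$ to $p_{1}$ to obtain $p_{2}\in(\mathds{O},p_{1}]$; every $c\le p_{2}$ lies below both $p_{1}$ and $p_{2}$, so the coefficient sums for $x$ and for $y$ are simultaneously nonnegative, and hence so is the sum for $x+y$. Next I check the generators. Any vector with all coefficients nonnegative (in particular each $1\cdot a$) satisfies the condition for every $c$, and a short case analysis shows the coefficient sum of $1\cdot(b\vee d)-1\cdot b-1\cdot d$ is nonnegative for every $c$ as well. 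The only generator requiring a genuine choice of witness is $1\cdot b+1\cdot d-1\cdot(b\vee d)$: given $a>\mathds{O}$ I take $w=a\wedge b$ if this is nonzero, otherwise $w=a\wedge d$ if that is nonzero, and otherwise $w=a$ (which then lies under $\overline{b\vee d}$). In each case every $c\le w$ produces coefficient sum $0$, using $b\wedge d=\mathds{O}$. This establishes $C_{0}\subseteq C$.

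For the reverse inclusion the engine is the disjointification remark preceding the theorem, together with the observation that both $1\cdot b+1\cdot d-1\cdot(b\vee d)$ and its negative are generators, so the linear span $W$ of all these additivity differences lies inside $C_{0}$. Given $x=\sum_{k}\alpha_{k}a_{k}\in C$, I replace $a_{1},\dots,a_{n}$ by a finite disjoint family of ``atoms'' $b_{1},\dots,b_{m}$ with each $a_{k}=\bigvee\{b_{j}:b_{j}\le a_{k}\}$. Telescoping the additivity relations along these disjoint joins shows $1\cdot a_{k}-\sum_{b_{j}\le a_{k}}1\cdot b_{j}\in W$ for every $k$ (the degenerate case $a_{k}=\mathds{O}$ being covered by $1\cdot\mathds{O}\in W$, taken from $b=d=\mathds{O}$). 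Summing with coefficients $\alpha_{k}$ gives $x-y\in W$, where $y=\sum_{j}\beta_{j}\,1\cdot b_{j}$ and $\beta_{j}=\sum_{k:\,b_{j}\le a_{k}}\alpha_{k}$.

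It remains to show the atom coefficients $\beta_{j}$ are nonnegative, and this is where I expect the main difficulty to sit. Since $W\subseteq C_{0}\subseteq C$ is a subspace and $C$ is additively closed, from $x\in C$ and $y-x\in W$ I get $y\in C$. Applying the defining condition to $y$ with the universally quantified element equal to $b_{j_{0}}$ yields a witness $w\le b_{j_{0}}$; for any $c\le w$, disjointness of the atoms forces $c\not\le b_{j}$ when $j\ne j_{0}$, so the coefficient sum collapses to $\beta_{j_{0}}$, giving $\beta_{j_{0}}\ge 0$. Consequently $y$ is a nonnegative combination of the generators $1\cdot b_{j}$, so $y\in C_{0}$, and finally $x=y+(x-y)\in C_{0}$. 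The crux, and the step I would spend the most care on, is recognizing that the two-sided additivity relations allow one to push an arbitrary element into disjoint-atom form modulo a subspace of $C_{0}$, after which membership in $C$ becomes equivalent to nonnegativity of the atom coefficients.
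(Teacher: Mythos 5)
Your proof is correct and follows essentially the same route as the paper: the same nested-witness argument for additive closure of $C$, the same case analysis ($a\wedge b$, $a\wedge d$, or $a\le\overline{b\vee d}$) for the one generator needing a genuine witness, and the same disjointification decomposition for minimality (your $x=(x-y)+y$ is exactly the paper's $e=\sum e_{k}+\sum_{b\in B}\beta_{b}b$). The only difference is cosmetic: you obtain $\beta_{j}\ge 0$ by first transporting membership to $y\in C$ through the subspace $W\subseteq C_{0}\subseteq C$, whereas the paper reads the same inequality directly off $x\in C$ via the observation that for $b\in B$ and $c\in\left(\mathds{O},b\right]$ one has $c\le a_{k}$ if and only if $b\le a_{k}$.
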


Note that the conditions imply that $C$ contains $\sum\limits_{i=1}^{n}\alpha_{n}a_{n}$, for any $a_{1},...,a_{n}\in A$ and $\alpha_{1},...,\alpha_{n}\ge 0$, $\alpha\mathds{O}$, for any $\alpha\in\R$, as well as $1\cdot \bigvee\limits_{i=1}^{n}b_{i}-\sum\limits_{i=1}^{n}1\cdot b_{n}$ and $\sum\limits_{i=1}^{n}1\cdot b_{n}-1\cdot \bigvee\limits_{i=1}^{n}b_{i}$, for any disjoint $b_{1},...,b_{n}\in A$.

\begin{proof}
Let us first prove that $C$ is a convex cone. It is clear that $\lambda C\subset C$, for every $\lambda\ge 0$. We need to show that if $e=\sum \alpha_{k}a_{k}$ and $f=\sum \beta_{k}a_{k}$ belong to $C$, then $e+f\in C$. Let $a>\mathds{O}$. There is $b\in\left(\mathds{O},a\right]$ such that $\sum\limits_{c\le a_{k}}\alpha_{k}\ge 0$, for all $c\in\left(\mathds{O},b\right]$; there is also $d\in\left(\mathds{O},b\right]\subset \left(\mathds{O},a\right]$ such that $\sum\limits_{c\le a_{k}}\beta_{k}\ge 0$, for all $c\in\left(\mathds{O},d\right]\subset \left(\mathds{O},b\right]$. Hence, if $c\in\left(\mathds{O},d\right]$, then $\sum\limits_{c\le a_{k}}\alpha_{k}+\beta_{k}\ge 0$. Since $a$ was arbitrary, we conclude that $e+f\in C$.\medskip

Let us show that $C$ contains $1\cdot b +1\cdot d- 1\cdot b\vee d$, for disjoint $b,d$ (other proofs are similar). Let $a>\mathds{O}$. If $a$ is disjoint with both $b$ and $d$, then for any $c\in\left(\mathds{O},a\right]$ we have $b,d,b\vee d \not\ge c$, and so the sum of the coefficients at the elements that exceed $c$ is $0$. If $a$ is not disjoint with say $b$, then any $c\in\left(\mathds{O},a\wedge b\right]$ is disjoint with $d$, but $b,b\vee d \ge c$, and so the sum of the coefficients is again $0$. \medskip

To show minimality let $D$ be another cone with the declared properties and let  $e=\sum\limits_{k=1}^{n} \alpha_{k}a_{k}\in C$. Let $B\subset A$ be a finite disjoint collection such that $a_{k}=\bigvee\left\{b\in B,~b\le a_{k}\right\}$, for every $k\in\overline{1,n}$. Then, $e_{k}=\alpha_{k}\left( a_{k}-\sum\limits_{b\le a_{k}} b\right) \in D$, for every $k\in\overline{1,n}$, and $$e=\sum\limits_{k=1}^{n} e_{k}+\sum\limits_{k=1}^{n}\sum\limits_{b\le a_{k}} \alpha_{k}b=\sum\limits_{k=1}^{n} e_{k}+\sum\limits_{b\in B}\sum\limits_{b\le a_{k}} \alpha_{k}b.$$ For every $b\in B$ there is $c\in\left(\mathds{O},b\right]$ such that $\sum\limits_{c\le a_{k}} \alpha_{k}\ge 0$. But since $B$ is a disjoint collection, for every $k\in\overline{1,n}$ either $a_{k}$ is disjoint with $b$ or $a_{k}\ge b$, and so $b\le a_{k}$ if and only if $c\le a_{k}$. Hence, $\beta_{b}=\sum\limits_{b\le a_{k}} \alpha_{k}\ge 0$, for every $b\in B$, and so $e=\sum\limits_{k=1}^{n} e_{k}+\sum\limits_{b\in B} \beta_{b}b$, where all summands belong to $D$, and so $e\in D$. Thus, $C\subset D$.
\end{proof}

\begin{remark}
In fact $C$ allows a simpler definition, which is slightly more convenient for proving minimality, but much less convenient to prove that $C$ is a convex cone. Namely the part $\forall c\in\left(\mathds{O},b\right]$ can be dropped, and $b$ can be used instead of $c$. To show the equivalence assume that there is $e=\sum\limits_{i=1}^{m}\alpha_{i}a_{i}$ that satisfies the weaker definition and $a>0$ is such that for every $b\in\left(\mathds{O},a\right]$ there is $c\in\left(\mathds{O},b\right]$ such that $\sum\limits_{c\le a_{k}}\alpha_{k}< 0$. Let $b_{0}=b$ and $b_{1}=c$. If $b_{0}>...>b_{2n-1}>\mathds{O}$ are selected, there are $b_{2n}\in\left(\mathds{O},b_{2n-1}\right]$ and $b_{2n+1}\in\left(\mathds{O},b_{2n}\right]$ such that $\sum\limits_{b_{2n}\le a_{k}}\alpha_{k}\ge 0$ and $\sum\limits_{b_{2n+1}\le a_{k}}\alpha_{k}<0$. Let $A_{n}=\left\{k,~ b_{n}\le a_{k}\right\}$. Then $A_{n}$ is an increasing sequence, and since $\sum\limits_{A_{n}}\alpha_{k}$ and $\sum\limits_{A_{n+1}}\alpha_{k}$ have different signs, $A_{n}$ is strictly increasing. However, this contradicts the fact that $\left|A_{n}\right|\le m$, for every $n\in\N$.\smallskip
\qed\end{remark}

Now that we have established that $C$ is a cone, we can introduce the equivalence relation on $E$. Namely, $e\sim f$ if $e-f \in C\cap -C$. Note that if $b_{1},...,b_{n}\in A$ are disjoint, then $1\cdot b_{1}+...+1\cdot b_{n} \sim 1\cdot \bigvee_{i=1}^{n}b_{i}$. It is easy to see now that any $e_{1},...,e_{n}\in E$ admit a common disjoint representation, i.e. there are disjoint $a_{1},...,a_{m}$ and $\left\{\alpha_{i,j}\right\}_{i,j=1}^{m,n}\subset\R$ such that $e_{k}\sim\sum \alpha_{i,k}a_{i}$, for every $k$. Moreover, if $e\sim\sum \alpha_{i}a_{i}$ is a disjoint representation, then $e\in C$ if and only if $\alpha_{i}\ge 0$, for every $i$. Indeed, if $e\in C$, then for every $i$ there is $b\in\left(\mathds{O},a_{i}\right]$ such that $\sum\limits_{b\le a_{k}}\alpha_{k}\ge 0$, but since $\left\{a_{k}\right\}$ are disjoint, the only $k$ such that $a_{k}\ge b$ is $k=i$, and so $\alpha_{i}\ge 0$.\medskip

Let $\Fo\left(A\right)=E\slash\sim$, with the quotient map $Q$. Then $QC$ is a convex cone in $\Fo\left(A\right)$ that satisfies $QC\cap -QC=\left\{0_{\Fo\left(A\right)}\right\}$, and so it turns $\Fo\left(A\right)$ into an ordered vector space. Observe also that $Q^{-1}QC=C+\Ker Q= C+ C\cap -C=C$, since $C+C=C$.\medskip

\begin{lemma}\label{lattice}
$\Fo\left(A\right)$ is a vector lattice with PPP.
\end{lemma}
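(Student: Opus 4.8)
The claim is that $\Fo\left(A\right)$ is a vector lattice with the principal projection property. Since $\Fo\left(A\right)$ is already an ordered vector space with generating cone $QC$, the task splits into two parts: first produce lattice operations (it suffices to construct $\left|e\right|$, or equivalently $e^{+}=e\vee 0$, for every $e$), and then verify PPP. The plan is to exploit the common disjoint representation established just before the lemma: every element $e\in\Fo\left(A\right)$ can be written (after applying $Q$) as $\sum_{i}\alpha_{i}a_{i}$ with the $a_{i}\in A$ pairwise disjoint. The key structural fact already proven is that for a disjoint representation, $e\in QC$ if and only if all $\alpha_{i}\ge 0$. This gives a completely explicit handle on the order.

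First I would define, for $e\sim\sum_{i}\alpha_{i}a_{i}$ in disjoint form, the candidate $e^{+}=\sum_{i}\alpha_{i}^{+}a_{i}$, where $\alpha_{i}^{+}=\max\{\alpha_{i},0\}$, and check that this is genuinely the least upper bound of $e$ and $0$. Using the characterization of the cone on disjoint representations, $e^{+}-e=\sum_{i}(\alpha_{i}^{+}-\alpha_{i})a_{i}=\sum_{i}\alpha_{i}^{-}a_{i}\in QC$ and $e^{+}\in QC$, so $e^{+}$ dominates both $e$ and $0$. For the least-upper-bound property I would take any common upper bound $g$ of $e$ and $0$; by refining to a joint disjoint representation of $e$ and $g$ over a single disjoint family, the coordinatewise criterion forces the coefficients of $g$ to dominate both $\alpha_{i}$ and $0$ in each slot, hence to dominate $\alpha_{i}^{+}$, so $g\ge e^{+}$. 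The main thing to be careful about here is \emph{well-definedness}: I must confirm that $e^{+}$ does not depend on the chosen disjoint representation, which follows because any two disjoint representations of the same element have a common refinement, and on a common disjoint family the coefficients agree modulo $\sim$.

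For PPP I would again argue coordinatewise on a joint disjoint representation. Given $e,f\in\Fo\left(A\right)_{+}$, write them as $e\sim\sum_{i}\alpha_{i}a_{i}$ and $f\sim\sum_{i}\beta_{i}a_{i}$ over a common disjoint family with all $\alpha_{i},\beta_{i}\ge 0$. Then $e\wedge nf\sim\sum_{i}(\alpha_{i}\wedge n\beta_{i})a_{i}$, and as $n\to\8$ the coefficient in slot $i$ stabilizes: it equals $\alpha_{i}$ whenever $\beta_{i}>0$ and equals $0$ whenever $\beta_{i}=0$. Because the disjoint family is \emph{finite}, the supremum over $n$ is attained at a finite stage, namely $\sup_{n}e\wedge nf=\sum_{\beta_{i}>0}\alpha_{i}a_{i}$, which is a genuine element of $\Fo\left(A\right)$; verifying it is the supremum is once more the coordinatewise cone criterion. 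I expect the main obstacle to be bookkeeping around the equivalence relation rather than any deep difficulty: I must consistently pass between representatives in $E=\bigoplus_{a\in A}\R$ and their classes, and justify that operations defined slotwise on disjoint representations descend to $\Fo\left(A\right)$. Once well-definedness is secured, the finiteness of every disjoint representation makes both the lattice operations and the PPP suprema reduce to elementary real-coefficient manipulations.
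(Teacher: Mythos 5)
Your proposal is correct and follows essentially the same route as the paper: work with a common disjoint representation, use the coordinatewise criterion ($e\in C$ iff all coefficients in a disjoint representation are $\ge 0$), check well-definedness via a common disjoint refinement, and for PPP identify $\sup_n f\wedge nh$ with $Q\bigl(\sum_{\beta_i>0}\alpha_i a_i\bigr)$. The only cosmetic differences are that you build $e^{+}$ where the paper builds $\left|e\right|$ (equivalent for establishing the lattice structure), and your observation that the sequence $f\wedge nh$ stabilizes at a finite stage slightly streamlines the verification that the supremum exists.
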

\begin{proof}
To prove that $\Fo\left(A\right)$ is a lattice it is enough to show that if $f\in \Fo\left(A\right)$, then there is $\left|f\right|=f\vee -f$. Let $f$ be given in its disjoint representation, i.e. $f=Qe$, where $e=\sum\limits_{i=1}^{n} \alpha_{i}a_{i}$ and $\left\{a_{i}\right\}_{i=1}^{n}$ are disjoint. Define $g\in E$ by $g=\sum\limits_{i=1}^{n} \left|\alpha_{i}\right|a_{i}$.

Let us show that $Qg$ does not depend on the disjoint representation, i.e. if $\left\{a'_{i}\right\}_{i=1}^{m}$ are disjoint, $e'=\sum\limits_{i=1}^{m} \alpha'_{i}a'_{i}\sim e$, then the analogously defined $g'$ is equivalent to $g$. Indeed, let $B\subset A$ be a finite disjoint collection such that $a_{k}=\bigvee\left\{b\le a_{k}\right\}$, for every $k\in\overline{1,n}$, and $a'_{k}=\bigvee\left\{b\le a'_{k}\right\}$, for every $k\in\overline{1,m}$. Since $\left\{a_{i}\right\}_{i=1}^{n}$ are disjoint, for every $b\in B$ there is at most one $i$ such that $b\le a_{i}$. Define $\alpha\left(b\right)=\alpha_{i}$ if such $i$ exists, and $\alpha\left(b\right)=0$ otherwise. Define $\alpha':B\to\R$ in a similar way for $e'$. Then, $e=\sum\limits_{i=1}^{n} \alpha_{i}a_{i}\sim \sum\limits_{i=1}^{n}\sum\limits_{b\le a_{i}}\alpha_{i}b=\sum\limits_{b\in B}\alpha\left(b\right)b$, while $e'\sim \sum\limits_{b\in B}\alpha'\left(b\right)b$. Since $e\sim e'$, and $B$ is a disjoint set it follows that $\alpha\left(b\right)=\alpha'\left(b\right)$, for every $b\in B$. Hence $g\sim \sum\limits_{b\in B}\left|\alpha\left(b\right)\right|b=\sum\limits_{b\in B}\left|\alpha'\left(b\right)\right|b\sim g'$.

We will now show that $Qg=f\vee -f$. First, clearly, $g-e,g+e\in C$, from where $Qg\ge f,-f$. Let $h\in \Fo\left(A\right)$ be such that $f,-f\le h$. Since $Qg$ does not depend on the disjoint representation, we may assume that $h$ has the same one as $f$, i.e. there is $w=\sum \gamma_{i}a_{i}$, such that $Qw=h$. As $Q\left(w-e\right)\in QC$, it follows that $w-e\in C$, and so $\gamma_{i}\ge \alpha_{i}$, for every $i$. Analogously, $\gamma_{i}\ge -\alpha_{i}$, for every $i$, and so $\gamma_{i}\ge \left|\alpha_{i}\right|$, for every $i$. Thus, $w-g\in C$, from where $h\ge Qg$.\medskip

In order to prove that $\Fo\left(A\right)$ has PPP we need to show that $\sup\limits_{n\in\N} f\wedge nh$ exists, for any $f,h\in\Fo\left(A\right)_{+}$. Let $f$ and $h$ be given in their common disjoint representation, i.e. $f=Qe$ and $h=Qg$, where $e=\sum \alpha_{i}a_{i}$ and $g=\sum \beta_{i}a_{i}$, $\left\{a_{i}\right\}$ are disjoint, and $\alpha_{i}, \beta_{i}\ge 0$. It follows from the proof of the previous claim that $f\wedge nh$, where $u_{n}=\sum \alpha_{i}\wedge n \beta_{i} a_{i}$, and $n\in\N$. Define $u\in E$ by $u=\sum\limits_{\beta_{i}>0} \alpha_{i}a_{i}$. In a similar way as above, one can show that $Qu$ is independent of the disjoint representation. Clearly, $u-u_{n}\in C$, and $Qu\ge Qu_{n}=f \wedge nh$. Let $w\in \Fo\left(A\right)$ be such that $f \wedge nh\le w$, for every $n\in \N$. Again, we may assume that $w$ has the same disjoint representation as $f$ and $h$, i.e. there is $v=\sum \gamma_{i}a_{i}$, such that $Qv=w$. It follows that $v-u_{n}\in C$, from where $\gamma_{i}\ge \alpha_{i}\wedge n \beta_{i}$, for every $n\in\N$ and $i$, and so if $\beta_{i}>0$, then $\gamma_{i}\ge \alpha_{i}$. Thus, $v-u\in C$, and so $w\ge Qu$.\end{proof}

\begin{proof}[Proof of parts (i)-(iii) od Theorem \ref{main}]
(i): Define  $\varphi: A\to \Fo\left(A\right)$ by $\varphi\left(a\right)=Q \left(1\cdot a\right)$. It is easy to see that $\varphi\left(\mathds{O}\right)=0_{\Fo\left(A\right)}$, $\varphi\left(\mathds{E}\right)$ is a strong unit and $\spa~ \varphi\left(A\right)= \Fo\left(A\right)$. Let us show that $\varphi$ is a lattice homomorphism. Fix disjoint $a,b\in A$. From Lemma \ref{cone} we have that $1\cdot a\vee b\sim  1\cdot a+ 1\cdot b$, from where $\varphi\left(a\vee b\right)=\varphi\left(a\right)+\varphi\left(b\right)$. Applying the reasoning from the proof of Lemma \ref{lattice}, we get
$$Q \left(1\cdot a\right)\wedge Q \left(1\cdot b\right)=Q\left(1\cdot a+0\cdot b\right)\wedge Q\left(0\cdot a+1\cdot b\right)=Q\left(0\cdot a+0\cdot b\right)=0_{\Fo\left(A\right)},$$
from where  $\varphi\left(a\right)\bot\varphi\left(b\right)$. Since $a,b$ were chosen arbitrarily, from Lemma \ref{homo} it follows that $\varphi$ is a homomorphism.

Finally, $\varphi$ is injective since if $\varphi\left(a\right)=\varphi\left(b\right)$, then $1\cdot a-1\cdot b\sim1\cdot a\wedge \overline{b} -1\cdot  b \wedge\overline{a} \in C\cap -C$, from where $a\wedge \overline{b} =\mathds{O}= b \wedge\overline{a}$, and so $a=b$.\medskip

(ii): Consider a map $T: E\to F$ defined by $T\sum \alpha_{k}a_{k}=\sum \alpha_{k}\psi\left(a_{k}\right)$. Clearly, $T \left(1\cdot a\right)=\psi\left(a\right)\ge 0_{F}$, for any $a\in A$, while from Lemma \ref{homo} it follows that\linebreak $T\left( 1\cdot b +1\cdot d- 1\cdot b\vee d\right)=\psi\left(b\right)+\psi\left(d\right)-\psi\left(b\vee d\right)=0_{F}$, for any disjoint $b,d\in A$. Hence, from Lemma \ref{cone} $TC\subset F_{+}$, from where $T\left(-C\right)\subset -F_{+}$, and so $Tf=Tg$, if $f\sim g$. Thus, $T$ induces an operator $J_{\psi}:\Fo\left(A\right)\to F$. Let us show that $J_{\psi}$ is a homomorphism, and consequently its image is a lattice. If $e=\sum \alpha_{i}a_{i}$ and $g=\sum \beta_{i}a_{i}$ is a common disjoint representation of $f,h\in \Fo\left(A\right)$, then $f\wedge h= Qu$, where $u=\sum \left(\alpha_{i}\wedge\beta_{i}\right)a_{i}$. Since $\psi$ preserves disjointness, $\left\{\psi\left(a_{i}\right)\right\}$ are disjoint, and so
\begin{align*}
J_{\psi}f\wedge J_{\psi}h&=Te\wedge Tg=\sum \alpha_{i}\psi\left(a_{i}\right)\wedge \sum \beta_{i}\psi\left(a_{i}\right)=\bigvee \alpha_{i}\psi\left(a_{i}\right)\wedge \bigvee \beta_{i}\psi\left(a_{i}\right)\\
&=\bigvee\left(\alpha_{i}\psi\left(a_{i}\right)\wedge \beta_{j}\psi\left(a_{j}\right)\right)=\sum \left(\alpha_{i}\wedge\beta_{i}\right)\psi\left(a_{i}\right)=Tu=J_{\psi}\left(f\wedge h\right).
\end{align*}
Thus, $J_{\psi}$ is a homomorphism. If $J_{\psi}$ is an injection, then so is $\psi=J_{\psi}\circ\varphi$. If $\psi$ is an injection, and $T\sum \alpha_{k}a_{k}=\sum \alpha_{k}\psi\left(a_{k}\right)=0$, where $\left\{a_{k}\right\}$ are disjoint,  $\left\{\psi\left(a_{k}\right)\right\}$ are also disjoint, and so linearly independent, from where $\alpha_{k}=0$, for every $k$. Hence, $J_{\psi}$ is an injection.\medskip

(iii): Since $\Fo\left(A\right)$ is an Archimedean vector lattice with a strong unit, there is a compact space $K$ and a lattice isomorphism $J$ from $\Fo\left(A\right)$ onto a dense sublattice of $\Co\left(K\right)$ that contains $\1$. Since $\Fo\left(A\right)$ has PPP, it follows from Proposition \ref{tot} that $K$ is totally disconnected. From Urysohn's lemma, $J\Fo\left(A\right)$ contains all simple functions. Conversely, since $a$ and $\overline{a}$ are positive, disjoint, and $a\vee \overline{a}=e$, it follows that $J\varphi\left(a\right)$ and $J\varphi\left(\overline{a}\right)$ are positive, disjoint and add up to $\1$. Hence, both of them are characteristic functions of some clopen sets in $K$. Since  $\spa~ \varphi\left(A\right)=\Fo\left(A\right)$, it follows that $J\Fo\left(A\right)$ consists of simple functions.
\end{proof}

Note that most of the material in this section is applicable to Boolean rings, which are not necessarily algebras. In particular we would like to state the following version of the theorem for rings.

\begin{corollary}If  $E$ is an ordered vector space, $0_{E}\in R\subset E_{+}$ is a Boolean ring with respect to the order of $E$, then $\spa R$ is a PPP sublattice of $E$ isomorphic to $\Fo\left(R\right)$, where the latter is defined analogously to $\Fo\left(A\right)$.
\end{corollary}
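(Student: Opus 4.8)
The plan is to obtain $\spa R$ as the image of the universal map from part (ii) of Theorem \ref{main}, applied to the inclusion of $R$ into $E$. The first task is to confirm that the construction of $\Fo$ together with Lemmas \ref{homo}, \ref{cone}, \ref{lattice} and parts (i)--(ii) of Theorem \ref{main} carries over to a Boolean ring $R$ (one lacking a greatest element). The only points at which the top element $\mathds{E}$ of a Boolean algebra is genuinely invoked are the disjointification remark preceding Theorem \ref{main} and the assertion that $\varphi\left(\mathds{E}\right)$ is a strong unit. For the former, given finitely many $r_{1},\dots,r_{n}\in R$ one simply works inside the finite Boolean subalgebra they generate, whose greatest element is $\bigvee_{i=1}^{n}r_{i}$; this produces the common disjoint representation exactly as before, with every complement $\overline{a}$ read as the relative complement available in $R$. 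The only genuine casualty is the strong unit, so $\Fo\left(R\right)$ is a PPP vector lattice carrying an injective lattice homomorphism $\varphi:R\to\Fo\left(R\right)_{+}$ with $\varphi\left(\mathds{O}\right)=0$ and $\spa~\varphi\left(R\right)=\Fo\left(R\right)$, and it retains the universal property (ii).

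Next I would apply this universal property to the inclusion $\psi:R\to E$, $\psi\left(r\right)=r$. The hypothesis that $R$ is a Boolean ring \emph{with respect to the order of $E$} means precisely that the lattice operations of $R$ are those of $E$ restricted to $R$; hence $\psi$ maps into $E_{+}$, is a lattice homomorphism, and satisfies $\psi\left(0_{E}\right)=0_{E}$. Being the identity on $R$, it is injective. Part (ii) of the ring version of Theorem \ref{main} then supplies a lattice homomorphism $J_{\psi}$ from $\Fo\left(R\right)$ onto $\spa~\psi\left(R\right)=\spa R$ with $\psi=J_{\psi}\circ\varphi$, and $J_{\psi}$ is injective because $\psi$ is. Thus $J_{\psi}$ is a lattice isomorphism of $\Fo\left(R\right)$ onto $\spa R$.

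It then remains to note that $\spa R$ is an honest sublattice of $E$ and inherits PPP. The first point is built into the proof of (ii): there one checks $J_{\psi}\left(f\wedge h\right)=J_{\psi}f\wedge J_{\psi}h$ with the meet on the right computed in $E$, using that the disjoint images $\psi\left(a_{i}\right)$ stay disjoint in $E$; hence the lattice operations of $\spa R$ coincide with those of $E$. The second point follows since $\Fo\left(R\right)$ has PPP by the ring version of Lemma \ref{lattice}, and PPP transfers across the lattice isomorphism $J_{\psi}$. This yields $\spa R\cong\Fo\left(R\right)$ as PPP sublattices of $E$.

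I expect the main obstacle to be purely one of bookkeeping rather than new ideas: verifying that nowhere in the definition of $C$, the formation of the quotient $\Fo\left(A\right)$, or the proofs of Lemmas \ref{cone} and \ref{lattice} is $\mathds{E}$ used beyond the strong-unit conclusion. The step warranting most care is disjointification, which nominally calls on complements; passing to the subalgebra generated by the finitely many elements at hand removes the difficulty, since that subalgebra is a genuine Boolean algebra with its own top.
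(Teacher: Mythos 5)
Your proof is correct and follows exactly the route the paper intends: the paper states this corollary without a written proof, remarking only that the section's material applies to Boolean rings, and your argument --- rebuilding $\Fo\left(R\right)$ with disjointification performed inside the finite Boolean subalgebra generated by $r_{1},\dots,r_{n}$ (whose top is $\bigvee_{i=1}^{n}r_{i}$, with complements read as relative complements), then applying the universal property (ii) to the inclusion $\psi:R\to E$ and transferring PPP across the resulting isomorphism --- is precisely that adaptation. You also correctly identify the strong unit as the only genuine casualty of dropping $\mathds{E}$, and correctly read the hypothesis ``Boolean ring with respect to the order of $E$'' as saying that the meets and joins in $R$ are those computed in $E$, which is what makes the inclusion a lattice homomorphism in the sense of Lemma 3.1.
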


\begin{bibsection}
\begin{biblist}

\bib{bourb}{book}{
   author={Bourbaki, Nicolas},
   title={General topology. Chapters 5--10},
   series={Elements of Mathematics (Berlin)},
   note={Translated from the French;
   Reprint of the 1989 English translation},
   publisher={Springer-Verlag, Berlin},
   date={1998},
   pages={iv+363},
}

\bib{mn}{book}{
   author={Meyer-Nieberg, Peter},
   title={Banach lattices},
   series={Universitext},
   publisher={Springer-Verlag, Berlin},
   date={1991},
   pages={xvi+395},
}

\bib{oxtoby}{book}{
   author={Oxtoby, John C.},
   title={Measure and category},
   series={Graduate Texts in Mathematics},
   volume={2},
   edition={2},
   note={A survey of the analogies between topological and measure spaces},
   publisher={Springer-Verlag, New York-Berlin},
   date={1980},
   pages={x+106},
}

\bib{royden}{book}{
   author={Royden, H. L.},
   author={Patrick Fitzpatrick},
   title={Real analysis},
   edition={4},
   publisher={Prentice Hall},
   date={2010},
   pages={x+505},
}

\end{biblist}
\end{bibsection}
\end{document}